\documentclass{article}
\usepackage{amsmath}
\usepackage{ntheorem-hyper}
\usepackage{amssymb}
\usepackage{authblk} % author envir package
\usepackage{algorithm}
\usepackage{algorithmicx}
\usepackage{algpseudocode}
\usepackage{graphicx}
\usepackage{graphbox}
\usepackage{subfigure}
\usepackage{svg}
\usepackage{epstopdf}
\usepackage{tabularx}  
\usepackage{booktabs}
\usepackage[numbers, sort]{natbib}
\usepackage{hyperref}
\usepackage[top=2cm, bottom=2cm, left=2cm, right=2cm]{geometry}  
\usepackage{xcolor} % \textcolor{red/blue/green/black/white/cyan/magenta/yellow/teal}{text}
\usepackage{color}
\usepackage[title]{appendix}

\usepackage[T1,T2A]{fontenc}
\usepackage[utf8]{inputenc}
\usepackage[russian, german, english]{babel}

\hypersetup{urlcolor=blue, citecolor=red}

\newtheorem{theorem}{Theorem}

\newtheorem*{proof}{Proof}

\def\RR{\mathbb{R}}

\def\ZZ{\mathbb{Z}}

\title{A flat geodesic with a periodic asymptote}
\author[+]{Hongda Qiu}
\begin{document}
\date{}
\maketitle

\begin{abstract}
We construct a closed surface of genus $2$ with nonpositive Gaussian curvature
and a complete nonperiodic flat geodesic asymptotic to a flat closed geodesic.
The metric is of class $C^{3,1/3}$ and its curvature is of class $C^{1,1/3}$,
but not $C^2$. Thus the periodic-asymptote obstruction for smooth surfaces
does not extend to $C^1$ Gaussian curvature.
\end{abstract}

\textit{Introduction.}
A geodesic is called \emph{flat} if the Gaussian curvature vanishes along it.
For a smooth closed surface of genus at least $2$ and nonpositive curvature,
the ergodicity problem studied by Pesin~\cite{Pesin1977} leads to the question
whether the unit vectors tangent to complete flat geodesics form a set of
zero Liouville measure; an affirmative answer implies ergodicity of the
geodesic flow. A stronger question asks whether every complete flat geodesic
is periodic; see \cite[Section~1]{Wu2015} for both questions and their relation.
One restriction in the smooth case is that a nonperiodic flat geodesic cannot
have a periodic asymptote~\cite[pp.~23, 29--30]{Petrunin}.
We show that this restriction fails for a metric of class $C^{3,1/3}$,
with Gaussian curvature of class $C^{1,1/3}$, on a closed surface of genus $2$.
The example concerns lower regularity: it does not settle the measure-zero
problem or the periodicity question for smooth metrics.

\begin{theorem}\label{thm:example}
There is a closed orientable surface $M$ of genus $2$ with a metric
$g\in C^{3,1/3}$ and Gaussian curvature $K\leq 0$ which contains a complete
nonperiodic flat geodesic $\gamma$ with a flat periodic asymptote $\delta$.
In particular, $K\in C^{1,1/3}$.
The parametrizations can be chosen so that
\[
 d_g\bigl(\gamma(t),\delta(t)\bigr)=O(t^{-1/3})
 \qquad\text{as }t\to\infty.
\]
\end{theorem}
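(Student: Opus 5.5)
The construction is local: we build a flat strip carrying the asymptotic pair, then glue it into a genus $2$ surface of nonpositive curvature. The natural local model is a flat cylinder. Take coordinates $(x,\theta)\in\RR\times\RR/\ZZ$ with a warped metric
\[
 g=dx^2+f(x)^2\,d\theta^2 ,
\]
where $K=-f''/f$. On such a surface of revolution every geodesic obeys Clairaut's relation, and Clairaut forces any geodesic asymptotic to the closed geodesic $\{x=0\}$ to spiral. So that ansatz alone will not do. Instead I will use a metric
\[
 g=dx^2+G(x,\theta)^2\,d\theta^2
\]
in Fermi coordinates about $\delta=\{x=0\}$, with $G(0,\theta)=1$ and $G_x(0,\theta)=0$. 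These conditions make $\delta$ a closed geodesic. Here $K=-G_{xx}/G$, so $K\le 0$ is equivalent to $G_{xx}\ge 0$.

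\textbf{Step 1: choose the curve $\gamma$ first.} Prescribe $\gamma$ as the graph $x=u(\theta)$ over $\theta\in[0,\infty)$ lifted to the cylinder, with $u>0$, $u\to 0$ and $u(\theta)\sim \theta^{-1/3}$. Then, after reparametrizing by arclength, $d_g(\gamma(t),\delta(t))=O(t^{-1/3})$. The curve is non-closed and never periodic because $u$ is strictly monotone. I then choose $G$ so that
\begin{itemize}
\item[(a)] $G\equiv 1$ near $\gamma$ to first order, that is, $G=1$ and $G_x=0$ along the graph, which makes $\gamma$ flat;
\item[(b)] $\gamma$ satisfies the geodesic equation;
\item[(c)] $G_{xx}\ge 0$ everywhere.
\end{itemize}
The cleanest way to meet these is to make the whole region between $\delta$ and $\gamma$ flat, with $G\equiv1$ for $0\le x\le u(\theta)$. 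That region is then a flat cylinder piece $dx^2+d\theta^2$. But in a flat metric the geodesics are straight lines, and a straight line with slope tending to $0$ cannot approach $x=0$ while winding. So the flat region must itself be curved in a way that is hidden by a change of coordinates.

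\textbf{Revised plan.} Make the flat region a wedge-like domain of a flat metric whose holonomy around $\delta$ is trivial, and let the nonflat part lie on the other side of $\gamma$. Concretely, the region between $\gamma$ and $\delta$ is isometric to a strip $\{0<y<h(s)\}$ in the Euclidean plane modulo a translation of period $1$. The edge $y=0$ is $\delta$. The edge $y=h(s)$ must be a geodesic of the ambient metric and a straight line in the flat chart, which forces $h$ to be affine and periodic, hence constant. So a flat two-sided neighbourhood is impossible. Consequently $K$ must be negative on $\gamma$'s outer side and also between the windings of $\gamma$, vanishing only on $\gamma$ itself and on $\delta$.

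Accordingly I take $K=-c\,\mathrm{dist}(\cdot,\gamma\cup\delta)^{\alpha}$-type data, or more practically I solve for $G$ from the Jacobi-type equation along the foliation by the leaves of $\gamma$. I set $G(x,\theta)=1+a(\theta)\,(x-u(\theta))^2\,x^2$ with $a>0$ chosen to vanish appropriately, and impose the geodesic equation for $\gamma$, namely $u''=\tfrac12\partial_x(G^2)$-type terms, which here vanish to second order. The drift of $\gamma$ toward $\delta$ therefore has to come from a first-order term. This shows the genuine mechanism: $G_x$ must be nonzero at $\gamma$ even though $K=0$ there.

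The \textbf{main obstacle} is exactly this balance. The geodesic equation needs $\partial_x G\ne 0$ along $\gamma$, of size $|u''|\sim\theta^{-7/3}$, while flatness needs $G_{xx}=0$ on $\gamma$ and $G_{xx}\ge0$ nearby. The gap width $u\sim\theta^{-1/3}$ then forces the derivatives of $K$ to scale so that $G$ is only $C^{3,1/3}$. I expect the exponent $1/3$ to be optimized by matching $|u''|/u^{k}$ bounds against Hölder norms.

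\textbf{Globalization.} Outside a compact neighbourhood of $\delta$ I make the metric hyperbolic-like with $K<0$. I then glue in a standard way to a genus $2$ surface, taking $\delta$ to be a separating or nonseparating closed geodesic, so that the smoothness away from $\delta$ is harmless. Finally, $K\in C^{1,1/3}$ follows from $g\in C^{3,1/3}$.
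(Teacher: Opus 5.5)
\textbf{There is a genuine gap: the proposal never constructs the metric.} The ``main obstacle'' you name at the end is the whole content of the theorem, and you stop there.

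Step~1(a) is self-contradictory. With $G(0,\theta)=1$, $G_x(0,\theta)=0$ and $G_{xx}\ge 0$, each profile $x\mapsto G(x,\theta)$ is nondecreasing on $[0,\infty)$. So $G=1$ at $x=u(\theta)$ forces $G\equiv 1$ on $[0,u(\theta)]$, which is exactly the flat-cylinder dead end you then rediscover. Your quartic ansatz fails for the same reason: $G=1$ and $dG=0$ on the graph make the metric flat to first order there, which forces $u''=0$. It is also not $1$-periodic in $\theta$.

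What is actually required is, for every $\theta$, a single convex profile $G(\cdot,\theta)$ such that:
\begin{itemize}
\item $G_{xx}=0$ at all heights $u(\theta+n)$, $n\ge 0$, where the windings of $\gamma$ cross that fiber;
\item $G_x\neq 0$ at those heights, as the geodesic equation dictates;
\item the $\theta$-component of the geodesic equation holds;
\item the metric is regular up to $x=0$.
\end{itemize}
Nothing in the proposal shows such a $G$ exists. Your regularity claims are also unsupported; your last sentence only records the easy implication $g\in C^{3,1/3}\Rightarrow K\in C^{1,1/3}$.

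The paper's key idea is to avoid prescribing $\gamma$ altogether. It uses coordinates whose coordinate lines are the geodesics asymptotic to $\delta$: $g=dt^2+G(t,u)^2\,du^2$ on $\RR\times[0,u(2\pi)]$, with the shifted gluing $(t,0)\sim(t+\pi/2,u(2\pi))$. This buys three things.
\begin{itemize}
\item The lines $u=\mathrm{const}$ are automatically geodesics. In the coordinates $(x,R)$ one has $g=dx^2+(dR-R_t\,dx)^2\ge dx^2$, with equality along them.
\item The choice $G=f(th(u))$ gives $K=-h^2f''(th)/f(th)\le 0$, with $K=0$ on the lines $h=0$. Here $f$ is convex with $f(0)=1$ and $f(s)\asymp(1+s)^{-2}$, and $h\asymp|u-u_i|^{3/4}$ near its zeros.
\item $\delta$ appears as the missing end, because the strip widths $\ell(t)\asymp t^{-4/3}$ sum to $r(t)\asymp t^{-1/3}$.
\end{itemize}
The real work is then the regularity at $\delta$: $K=O(R^6)$, $K_R=O(R^2)$, second derivatives $O(R^{-3})$, a H\"older interpolation, reflection across $R=0$, and isothermal coordinates with elliptic regularity for $\Delta\varphi=-Ke^{2\varphi}$ to reach $g\in C^{3,1/3}$.

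\textbf{Your globalization also fails as stated.} You never say where the other end of $\gamma$ goes, and it cannot stay in a collar of $\delta$. A geodesic remaining in such a collar for all time lifts to the universal cover at bounded distance from a lift of $\delta$. The flat strip theorem then makes $d(\gamma(t),\delta)$ constant, hence zero, so $\gamma=\delta$. Thus $\gamma$ must leave every collar of $\delta$ while staying flat. That is incompatible with making the metric ``hyperbolic-like with $K<0$'' outside a neighbourhood of $\delta$. You also never specify the metric on the other side $x<0$ of $\delta$.

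The paper resolves both points explicitly:
\begin{itemize}
\item Taking $f(s)=1-s$ for $s\le 0$ makes the end $t\le 0$ Euclidean.
\item The map $\Psi(t,u)=c(u)+tN(u)$ identifies that end with the exterior of a disk. There the two flat lines $u=0$ and $u=u(\pi)$ leave along opposite rays of one line.
\item This piece replaces a disk in a flat square torus, and a vertical closed geodesic of the torus joins the two flat tails into one complete flat geodesic.
\item The resulting torus with boundary $\delta$ is doubled along $\delta$, using the metric reflected across $R=0$, to give genus $2$.
\end{itemize}
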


\begin{proof}\normalfont
The idea is to glue the sides of a narrowing strip with a shift.
Its missing end will be a closed geodesic.
We arrange that the other end is Euclidean; this allows us to close the surface.

\textit{The functions.}
Choose an even nonnegative function $\rho\in C^\infty_c((-1,1))$ with
$\int_{-1}^{1}\rho(s)\,ds=1$. Put
\[
 f_0(t)=
 \begin{cases}
  1-t,&t\leq 0,\\
  (1+t/2)^{-2},&t\geq 0,
 \end{cases}
 \qquad
 f(t)=\frac12\int_{-1}^{1}f_0(2t-1-s)\rho(s)\,ds.
\]
Then $f$ is smooth, positive, decreasing, and convex. Moreover,
\begin{equation}\label{eq:profile}
 f(s)=1-s\quad(s\leq 0),\qquad
 f(s)\asymp(1+s)^{-2},\qquad
 |f^{(m)}(s)|\leq C_m(1+s)^{-2-m}\quad(s\geq 0).
\end{equation}
Here $a\asymp b$ means that $a/b$ is bounded above and below by positive
constants. Constants in the proof need not be the same at different occurrences.

For $\theta\in(0,\pi)\cup(\pi,2\pi)$, set
\[
 w(\theta)=|\sin\theta|^{-3/7}-\frac12\sin\theta,
 \qquad
 u(\theta)=\int_0^\theta w(s)\,ds.
\]
The singularities of $w$ are integrable; see Figure~\ref{fig:w-theta}.
Let $\theta(u)$ be the inverse function.

\begin{figure}[htbp]
\centering
\includegraphics[width=0.82\textwidth]{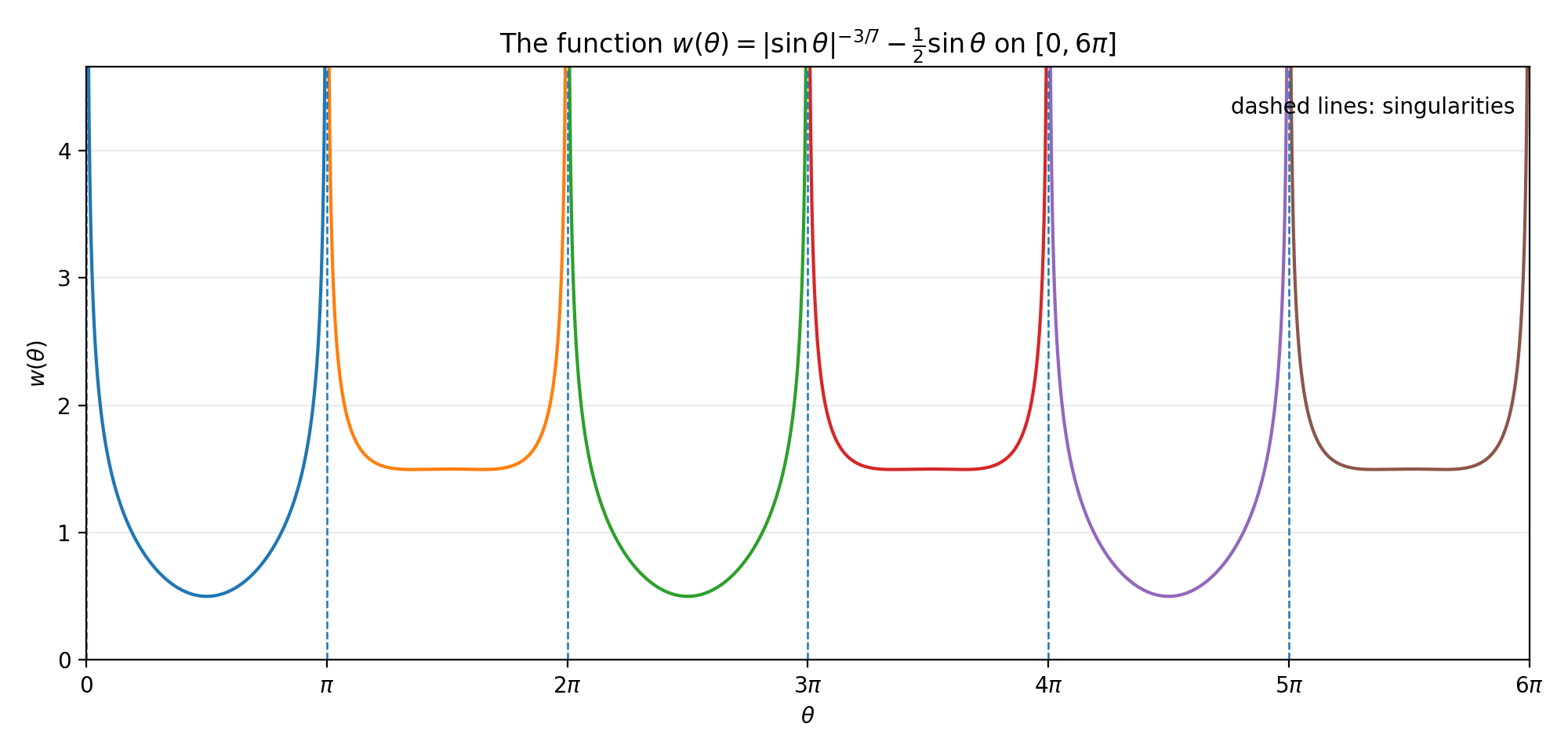}
\caption{The function $w(\theta)=|\sin\theta|^{-3/7}-\frac12\sin\theta$ on $[0,6\pi]$.
The dashed lines mark its singularities at $k\pi$, $k=0,1,\dots,6$.}
\label{fig:w-theta}
\end{figure}
Set $h(u)=\theta'(u)$, with value zero at $0,u(\pi),u(2\pi)$.
This function is positive and smooth elsewhere. Near each of its zeros $u_i$,
\begin{equation}\label{eq:h}
 h(u)\asymp|u-u_i|^{3/4},\qquad
 |h'(u)|\leq C h(u)^{-1/3},\qquad
 |h''(u)|\leq C h(u)^{-5/3}.
\end{equation}
Indeed, $|u-u_i|\asymp|\theta-\theta_i|^{4/7}$. Also,
\begin{equation}\label{eq:moments}
 \int_0^{u(2\pi)} h(s)\,ds=2\pi,\qquad
 \int_0^{u(2\pi)}(\cos\theta(s),\sin\theta(s))\,ds=\left(0,-\frac\pi2\right).
\end{equation}
The second identity follows by changing variables to $\theta$ and using symmetry.

\textit{The strip and its missing circle.}
On $\RR\times[0,u(2\pi)]$, put
\begin{equation}\label{eq:strip}
 g=dt^2+G(t,u)^2\,du^2,\qquad G(t,u)=f(th(u)),
\end{equation}
and identify
\begin{equation}\label{eq:gluing}
 (t,0)\sim(t+\pi/2,u(2\pi)).
\end{equation}
The boundary metrics agree. The quotient is an open annulus, denoted by $Q$.
For now, \eqref{eq:strip} defines a length metric; its differentiability will be
checked in different coordinates.

Where $h>0$, the usual curvature formula gives
\begin{equation}\label{eq:K}
 K=-\frac{G_{tt}}G=-h^2\frac{f''(th)}{f(th)}\leq 0.
\end{equation}
Give $K$ the value zero on the lines $h=0$.
Since $f''$ vanishes to all orders at zero, this causes no loss of
regularity at any finite value of $t$.

Let $\ell(t)=\int_0^{u(2\pi)}G(t,s)\,ds$.
For $t\geq1$, \eqref{eq:profile} and \eqref{eq:h} imply
\begin{equation}\label{eq:ell}
 \ell(t)\asymp t^{-4/3},\qquad
 |\ell^{(m)}(t)|\leq C_m t^{-4/3-m}.
\end{equation}
For the first estimate, near a zero of $h$ use
\[
 \int_0^\varepsilon(1+t s^{3/4})^{-2}\,ds\asymp t^{-4/3}.
\]
The contribution away from the zeros is $O(t^{-2})$.
The derivative estimates follow in the same way.

Define
\begin{equation}\label{eq:coordinates}
 r(t)=\sum_{n=1}^{\infty}\ell(t+n\pi/2),\qquad
 x=t\pmod{\pi/2},\qquad
 R(t,u)=r(t)+\int_0^u G(t,s)\,ds.
\end{equation}
The series and its derivatives converge locally uniformly.
Since $r(t)=r(t+\pi/2)+\ell(t+\pi/2)$, these coordinates agree under
\eqref{eq:gluing}. For fixed $x$, the successive intervals
$[r(t),r(t-\pi/2)]$ fit together and cover $(0,\infty)$.
Thus $(x,R)$ identify $Q$ with $(\RR/(\tfrac\pi2\ZZ))\times(0,\infty)$.
Furthermore,
\begin{equation}\label{eq:radial-rate}
 R(t,u)\asymp r(t)\asymp t^{-1/3}\qquad(t\to\infty),
\end{equation}
uniformly in $u$.

Subscripts $t$ denote derivatives with $u$ fixed; $\partial_x$ and $\partial_R$
act in the new coordinates. Since $dR=R_t\,dt+G\,du$, we have
\begin{equation}\label{eq:metric}
 g=dx^2+(dR-R_t\,dx)^2.
\end{equation}
Moreover,
\begin{equation}\label{eq:radial-derivatives}
 \begin{gathered}
 \partial_R R_t=\frac{G_t}{G},\qquad
 \partial_x R_t=R_{tt}-R_t\frac{G_t}{G},\\
 R_t=O(t^{-4/3}),\qquad
 \frac{G_t}{G}=O(t^{-1}),\qquad R_{tt}=O(t^{-7/3}).
 \end{gathered}
\end{equation}
These identities agree across the cut.
It follows that $R_t$ extends to $R=0$ as a $C^1$ function with
$R_t=d(R_t)=0$ there.
Add the circle $\delta=\{R=0\}$.
It has length $\pi/2$.

Since \eqref{eq:metric} is at least $dx^2$, the curves
\[
 \gamma_0(t)=[t,0],\qquad \gamma_1(t)=[t,u(\pi)],\qquad
 \delta(t)=(t\pmod{\pi/2},0)
\]
locally minimize length and are unit-speed geodesics.
For $\gamma_0$, the radial segment in \eqref{eq:coordinates} gives
\begin{equation}\label{eq:asymptote}
 d_g\bigl(\gamma_0(t),\delta(t)\bigr)
 \leq r(t)=O(t^{-1/3}).
\end{equation}
Both $\gamma_0$ and $\gamma_1$ approach $\delta$ as $t\to\infty$.

\textit{Regularity.}
It remains to check the metric and the curvature at $R=0$.
Put
\[
 p=\frac{f'}f,\qquad q=\frac{f''}f,\qquad
 H(s)=\frac{2q(s)+s q'(s)}{f(s)}.
\]
Differentiating the defining integral for $f$ gives, for $s\geq0$,
\begin{equation}\label{eq:auxiliary}
 \begin{gathered}
 |q^{(m)}(s)|\leq C_m(1+s)^{-2-m}\quad(0\leq m\leq2),\\
 |H(s)|\leq C(1+s)^{-1},\qquad
 |H'(s)|\leq C(1+s)^{-2},\qquad
 \left|\frac{p(s)+s p'(s)}{f(s)}\right|\leq C.
 \end{gathered}
\end{equation}
For example, $f(s)=\frac12s^{-2}(1-s^{-1}+O(s^{-2}))$, with differentiated
remainders. This explains the cancellation in $H$.
The functions $q$ and $H$ vanish to all orders at zero.

In the $(x,R)$ coordinates, \eqref{eq:K} yields
\begin{equation}\label{eq:K-derivatives}
 K_R=-hh'H(th),\qquad
 K_x=-h^3q'(th)-R_t K_R.
\end{equation}
Thus
\[
 |K|\leq Ct^{-2},\qquad
 |K_R|\leq C\frac{h^{2/3}}{1+th}\leq Ct^{-2/3},\qquad
 |K_x|\leq Ct^{-2}.
\]
Together with \eqref{eq:radial-rate}, this proves
\begin{equation}\label{eq:K-extension}
 K=O(R^6),\qquad K_R=O(R^2),\qquad K_x=O(R^6).
\end{equation}
In particular, $K$ extends as a $C^1$ function with $K=dK=0$ on $\delta$.

We also need a H\"older estimate to recover the differentiability of the metric.
One more differentiation of \eqref{eq:K-derivatives}, using
\eqref{eq:h} and \eqref{eq:auxiliary}, gives
\begin{equation}\label{eq:second}
 |K_{xx}|+|K_{xR}|+|K_{RR}|\leq Ct\leq CR^{-3}\qquad(t\geq1).
\end{equation}
For example,
\[
 K_{RR}=-\frac{((h')^2+hh'')H(th)+th(h')^2H'(th)}{f(th)}.
\]
It is $O(t)$; the other derivatives satisfy the same bound.
The derivatives extend across each finite-$t$ cut because $q$ is flat at zero.
If two points are a coordinate distance $d<R/2$ apart, then
\[
 |K_x(z)-K_x(z')|+|K_R(z)-K_R(z')|\leq C\min\{R^2,R^{-3}d\}\leq C d^{2/5}.
\]
For the remaining pairs use \eqref{eq:K-extension}.
Consequently, $K\in C^{1,2/5}$, and in particular $K\in C^{1,1/3}$.

Next, $G_t/G=hp(th)$ is uniformly $3/4$-H\"older in $R$.
Indeed, where $h\leq1/t$, the measures $dR$ and $du$ are comparable,
and \eqref{eq:h} applies directly.
Where $h\geq1/t$, we have
\[
 \left|\frac{G_t}{G}\right|\leq Ct^{-1},\qquad
 \left|\partial_R\left(\frac{G_t}{G}\right)\right|
 =\left|h'\frac{p(th)+thp'(th)}{f(th)}\right|\leq Ct^{1/3}.
\]
Hence $\left|\frac{G_t}{G}(z)-\frac{G_t}{G}(z')\right|\leq C\min\{t^{-1},t^{1/3}d\}\leq Cd^{3/4}$
on each such interval.
There are uniformly finitely many intervals in each strip.
The estimates also join across strips, since $G_t/G=0$ on their common edges.

For $V=\partial_x+R_t\partial_R$, we have
\begin{equation}\label{eq:curvature-identity}
 V\left(\frac{G_t}{G}\right)=-K-\left(\frac{G_t}{G}\right)^2.
\end{equation}
Following $V$ and then a radial segment proves the same H\"older estimate
in both coordinates.
Also, $\partial_R R_{tt}=-K$ and $\partial_x R_{tt}=R_{ttt}+R_tK$ are bounded.
Equation \eqref{eq:radial-derivatives} now gives $R_t\in C^{1,3/4}$.
Thus the metric \eqref{eq:metric} is at least $C^{1,1/3}$ up to $\delta$.

Reflect the metric across $R=0$, extending $R_t$ oddly and $K$ evenly.
The preceding estimates remain valid.
There is no additional curvature on the cuts or on $\delta$:
\eqref{eq:curvature-identity} can be written as
\[
 \partial_x\left(\frac{G_t}{G}\right)
 +\partial_R\left(R_t\frac{G_t}{G}\right)=-K,
\]
and integration across a cut gives no boundary term; at $R=0$ use $R_tG_t/G\to0$.

Finally, use isothermal coordinates, which are $C^{2,1/3}$ functions of
$(x,R)$; see \cite[Section~1]{DeTurckKazdan}.
In these coordinates $g=e^{2\varphi}(dy_1^2+dy_2^2)$ and
\[
 \Delta\varphi=-K e^{2\varphi}.
\]
Initially $\varphi\in C^{1,1/3}$. The equation holds weakly, so elliptic
regularity gives $\varphi\in C^{3,1/3}$.
The transitions between oriented isothermal charts are holomorphic.
They therefore give a smooth structure in which $g\in C^{3,1/3}$.
In particular, \eqref{eq:K} is the classical Gaussian curvature, including
on the flat geodesics and the reflected circle.

\textit{Closing the surface.}
Set
\[
 E(u)=(\cos\theta(u),\sin\theta(u)),\qquad
 N(u)=(-\sin\theta(u),\cos\theta(u)),\qquad
 c(u)=\int_0^u E(s)\,ds.
\]
For $t\leq0$, we have $G=1-th$.
Since $N'=-hE$, the map
\begin{equation}\label{eq:Euclidean-end}
 \Psi(t,u)=c(u)+tN(u)
\end{equation}
satisfies $\Psi_t=N$ and $\Psi_u=(1-th)E$.
It is therefore a local isometry to the Euclidean plane.
By \eqref{eq:moments}, $c(u(2\pi))=-\tfrac\pi2N(0)$ and $N(u(2\pi))=N(0)$.
Hence $\Psi(t+\pi/2,u(2\pi))=\Psi(t,0)$, as required by the gluing.

Far enough out, $\Psi$ is one-to-one and covers the exterior of a disk.
Here is a direct check.
For a sufficiently distant point $z$, a preimage with $t<0$ must satisfy
\[
 \langle z-c(u),E(u)\rangle=0,\qquad
 t=\langle z-c(u),N(u)\rangle<0.
\]
All such solutions lie in a small interval about the direction
$N(u)=-z/|z|$.
In the $\theta$ variable, the derivative of the first expression is
$t-w(\theta)<0$ there, and its endpoint values have opposite signs.
Thus there is exactly one solution, with the boundary identification understood.

Symmetry also gives $c(u(\pi))=(0,b)$ for some $b$.
Consequently,
\[
 \Psi(t,0)=(0,t),\qquad \Psi(t,u(\pi))=(0,b-t).
\]
The two flat geodesics therefore leave through opposite rays of the same
Euclidean line.

Take a large flat square torus and remove a round disk centered on a vertical
closed geodesic. Replace the disk by the part of our annulus between
$\delta$ and a large circle in its Euclidean end.
The metrics agree in Euclidean neighborhoods of the gluing circle.
The result is a torus with one boundary component, namely $\delta$.

The vertical torus geodesic outside the removed disk joins the two flat rays.
Together they form a complete flat geodesic $\gamma$ whose two ends approach
$\delta$. It is not periodic, since it contains points outside $\delta$ but
approaches $\delta$.
Double the surface along $\delta$.
The result is a closed orientable surface of genus $2$.
See Figure~\ref{fig:topological-model}.
The regularity and the sign of its curvature were checked above, and
\eqref{eq:asymptote} gives the stated rate after a translation of the parameter.
\hfill$\square$
\end{proof}

\begin{figure}[htbp]
\centering
% Crop the preview to the surface; its legend is described in the caption.
\includegraphics[width=0.43\textwidth,
 trim=82bp 129.2bp 69.2bp 119.2bp,clip]{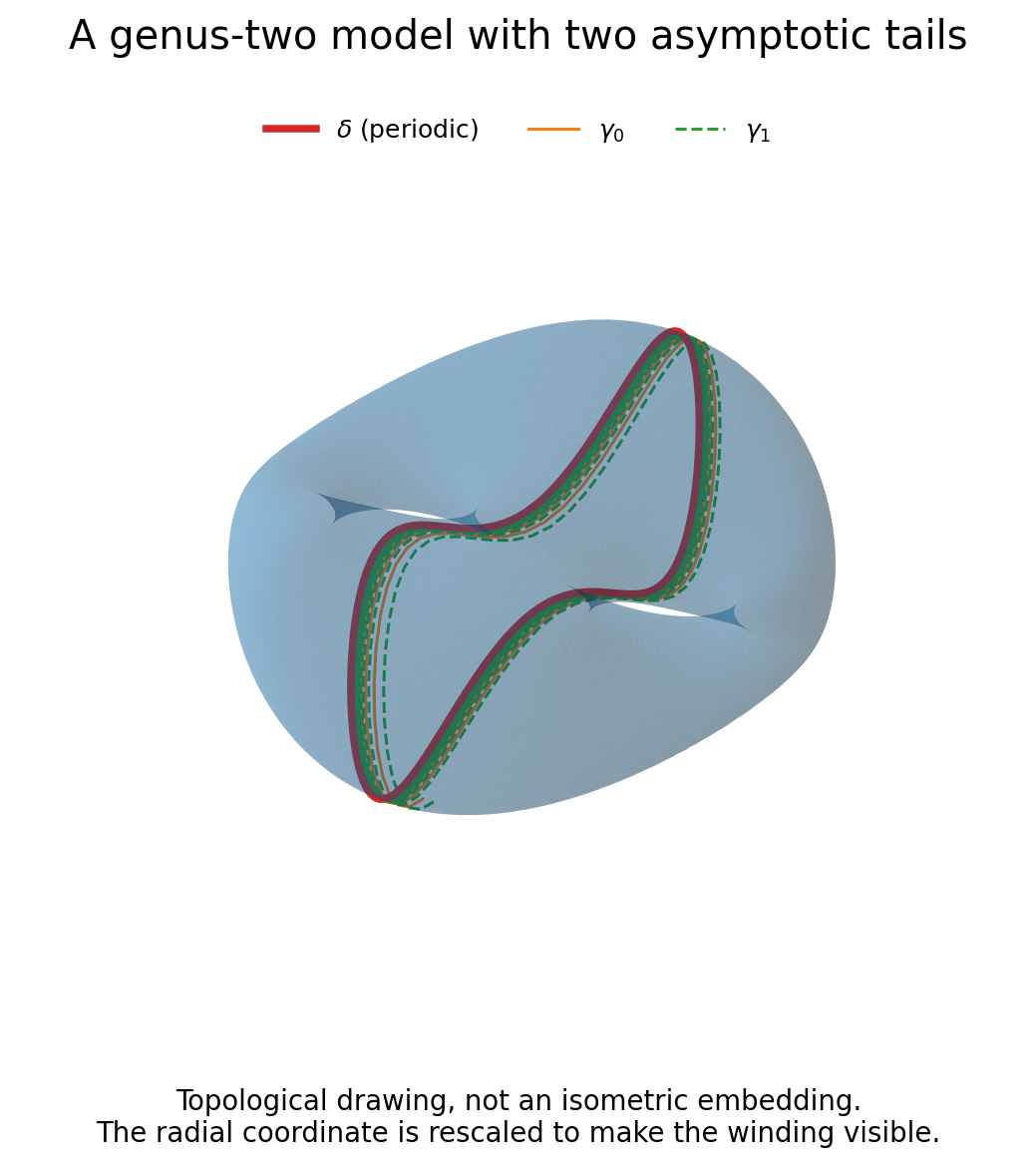}
\caption{A topological model of $M$, not an isometric embedding.
The red curve is $\delta$; the orange and dashed green curves show the tails
$\gamma_0$ and $\gamma_1$. Their connecting arc is omitted;
transverse distances are rescaled.}
\label{fig:topological-model}
\end{figure}

\textit{Remark.}
The curvature is not $C^2$.
Fix $u$ with $h(u)>0$. Then $K(t,u)\sim-6t^{-2}$, while the transverse arc
from $(t,u)$ to the flat line has length at most $\ell(t)\asymp t^{-4/3}$.
If $K$ were $C^2$, compactness and $K\leq0$ would give
$|K(t,u)|\leq C\ell(t)^2=O(t^{-8/3})$, a contradiction.
This is the estimate used in the periodic-asymptote obstruction in
\cite[pp.~29--30]{Petrunin}.

\begingroup
\small
\linespread{1}\selectfont

\endgroup
\end{document}